\newtheorem{theorem}{Theorem}
\theoremstyle{plain}
\newtheorem{corollary}{Corollary}
\newtheorem{definition}{Definition}
\newtheorem{lemma}{Lemma}
\newtheorem{remark}{Remark}
\numberwithin{equation}{section}
\begin{document}
\title[Inequalities via preinvexity and prequasiinvexity]{Simpson type
inequalities for first order differentiable preinvex and prequasiinvex
functions}
\author{M.Emin \"{O}zdemir$^{\blacklozenge }$}
\address{$^{\blacklozenge }$Atat\"{u}rk University, K.K. Education Faculty,
Department of Mathematics, Erzurum 25240, Turkey}
\email{emos@atauni.edu.tr}
\author{Merve Avci Ardic$^{\bigstar \diamondsuit }$}
\address{$^{\bigstar }$Adiyaman University, Faculty of Science and Arts,
Department of Mathematics, Adiyaman 02040, Turkey}
\email{mavci@adiyaman.edu.tr}
\thanks{$^{\diamondsuit }$Corresponding Author}
\subjclass{26D10, 26D15}
\keywords{Simpson inequality, preinvex function, h\"{o}lder inequality,
power-mean inequality}

\begin{abstract}
In this paper, we obtain some inequalities for functions whose first
derivatives in absolute value are preinvex and prequasiinvex.
\end{abstract}

\maketitle

\section{introduction and preliminaries}

Suppose $f:[a,b]\rightarrow 
\mathbb{R}
$ is a four times continuously differentiable mapping on $(a,b)$ and $%
\left\Vert f^{(4)}\right\Vert _{\infty }=\sup \left\vert
f^{(4)}(x)\right\vert <\infty .$ The following inequality%
\begin{eqnarray*}
&&\left\vert \frac{1}{3}\left[ \frac{f(a)+f(b)}{2}+2f\left( \frac{a+b}{2}%
\right) \right] -\frac{1}{b-a}\int_{a}^{b}f(x)dx\right\vert \\
&\leq &\frac{1}{2880}\left\Vert f^{(4)}\right\Vert _{\infty }\left(
b-a\right) ^{4}
\end{eqnarray*}%
is well known in the literature as Simpson's inequality.

For some results about Simpson inequality see \cite{ADS}-\cite{D}.

Let $K$ be a nonempty closed set in $%
\mathbb{R}
^{n}.$ We denote by $\langle .,.\rangle $ and $\left\Vert .\right\Vert $ the
inner product and norm respectively. Let $f:K\rightarrow 
\mathbb{R}
$ and $\eta :K\times K\rightarrow 
\mathbb{R}
$ be continuous functions.

\begin{definition}
(See \cite{12}) Let $u\in K.$ Then the set $K$ is said to be invex at $u$
with respect to $\eta \left( .,.\right) ,$ if 
\begin{equation*}
u+t\eta (v,u)\in K,\text{ \ \ }\forall u,v\in K,\text{ \ \ }t\in \lbrack
0,1].
\end{equation*}%
$K$ is said to be invex set with respect to $\eta ,$ if $K$ is invex at each 
$u\in K.$ The invex set $K$ is also called a $\eta -$connected set.
\end{definition}

\begin{remark}
(See \cite{11}) We would like to mention that the Definition 1 of an invex
set has a clear geometric interpretation. This definition essentially says
that there is a path starting from a point $u$ which is contained in $K.$ We
don't require that the point $v$ should be one of the end points of the
path. This observation plays an important role in our analysis . Note that,
if we demand that $v$ should be an end point of the path for every pair of
points, $u,v\in K,$ then $\eta (v,u)=v-u$ and consequently invexity reduces
to convexity. Thus, it is true that every convex set is also an invex set
with respect to $\eta (v,u)=v-u$, but the converse is not necessarily true.
\end{remark}

\begin{definition}
(See \cite{12}) The function $f$ on the invex set $K$ is said to be preinvex
with respect to $\eta ,$ if 
\begin{equation*}
f(u+t\eta (v,u))\leq (1-t)f(u)+tf(v),\text{ \ \ }\forall u,v\in K,\text{ \ \ 
}t\in \lbrack 0,1].
\end{equation*}%
The function $f$ is said to be preconcave if and only if $-f$ is preinvex.
Note that every convex function is a preinvex function, but the converse is
not true. For example, the function $f(u)=-\left\vert u\right\vert $ is not
a convex function, but it is a preinvex function with respect to $\eta ,$
where 
\begin{equation*}
\eta (v,u)=\left\{ 
\begin{array}{ccc}
v-u,\text{ } &  & \text{if }v\leq 0,u\leq 0\text{ \ \ and \ \ }v\geq 0,u\geq
0 \\ 
&  &  \\ 
u-v, &  & \text{\ otherwise.}%
\end{array}%
\right.
\end{equation*}
\end{definition}

\begin{definition}
(See \cite{10}) The function $f$ on the invex set $K$ is said to be
prequasiinvex with respect to $\eta ,$ if%
\begin{equation*}
f(u+t\eta (v,u))\leq \max \left\{ f(u),f(v)\right\} ,\text{ \ \ }\forall
u,v\in K,\text{ \ \ }t\in \lbrack 0,1].
\end{equation*}
\end{definition}

In this paper, we establish some new Simpson type inequalities for preinvex
and prequasiinvex functions.

\section{Simpson type inequalities for preinvex functions}

We used the following Lemma to obtain our main results.

\begin{lemma}
\label{lem 3.1} Let $I\subseteq 
\mathbb{R}
$ be an open invex subset with respect to $\eta :I\times I\rightarrow 
\mathbb{R}
_{+}$ and $f:I\rightarrow 
\mathbb{R}
$ be an absolutely continuous mapping on $I$,$a,b\in I$ with $\eta \left(
b,a\right) \neq 0.$ If $f^{\prime }$ is integrable on $\eta -$path $P_{ac},$ 
$c=a+\eta (b,a),$ following equality holds:%
\begin{eqnarray*}
&&\left\vert \frac{1}{6}\left[ f(a)+4f\left( \frac{2a+\eta (b,a)}{2}\right)
+f(a+\eta (b,a))\right] -\frac{1}{\eta (b,a)}\int_{a}^{a+\eta
(b,a)}f(x)dx\right\vert  \\
&=&\eta (b,a)\int_{0}^{1}m(t)f^{\prime }(a+t\eta (b,a))dt,
\end{eqnarray*}%
where%
\begin{equation*}
\ \ m(t)=\left\{ 
\begin{array}{c}
t-\frac{1}{6},\text{ \ \ \ \ }t\in \left[ 0,\frac{1}{2}\right)  \\ 
\\ 
t-\frac{5}{6},\text{ \ \ \ \ }t\in \left[ \frac{1}{2},1\right] .%
\end{array}%
\right. 
\end{equation*}
\end{lemma}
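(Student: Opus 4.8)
The plan is to start from the right-hand side and reduce it, by integration by parts, to the left-hand side. Since $\eta(b,a)\in\mathbb{R}_{+}$ is nonzero, it suffices to establish the signed identity
\begin{equation*}
\frac{1}{6}\left[ f(a)+4f\!\left( \tfrac{2a+\eta (b,a)}{2}\right) +f(a+\eta (b,a))\right] -\frac{1}{\eta (b,a)}\int_{a}^{a+\eta (b,a)}f(x)\,dx=\eta (b,a)\int_{0}^{1}m(t)\,f^{\prime }(a+t\eta (b,a))\,dt,
\end{equation*}
from which the stated equality of absolute values is immediate. First I would record the regularity that makes the manipulations legitimate: since $I$ is invex with respect to $\eta$ and $a\in I$, the segment $\{a+t\eta(b,a):t\in[0,1]\}$ is precisely the $\eta$-path $P_{ac}$, $c=a+\eta(b,a)$, and it lies in $I$; hence $t\mapsto f(a+t\eta(b,a))$, being $f$ restricted to this segment composed with an affine map, is absolutely continuous on $[0,1]$ with $\frac{d}{dt}f(a+t\eta(b,a))=\eta(b,a)f'(a+t\eta(b,a))$ for a.e. $t$, and this derivative is integrable by the hypothesis that $f'$ is integrable on $P_{ac}$. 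This justifies both the convergence of the right-hand integral and the integrations by parts below.

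Next I would split the integral at the breakpoint of $m$, writing
\begin{equation*}
J:=\int_{0}^{1}m(t)\,f^{\prime }(a+t\eta (b,a))\,dt=\int_{0}^{1/2}\!\left(t-\tfrac16\right)f^{\prime }(a+t\eta (b,a))\,dt+\int_{1/2}^{1}\!\left(t-\tfrac56\right)f^{\prime }(a+t\eta (b,a))\,dt,
\end{equation*}
the value of $m$ at the single point $t=\tfrac12$ being irrelevant to the integral. On each piece I integrate by parts, taking $u=m(t)$ (so $du=dt$, since $m$ has slope $1$ on each subinterval) and $dv=f'(a+t\eta(b,a))\,dt$ with antiderivative $v=\frac{1}{\eta(b,a)}f(a+t\eta(b,a))$. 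Collecting the boundary terms at $t=0,\tfrac12,1$: the first piece contributes $\frac{1}{\eta(b,a)}\bigl[(\tfrac12-\tfrac16)f(\tfrac{2a+\eta(b,a)}{2})+\tfrac16 f(a)\bigr]$ and the second $\frac{1}{\eta(b,a)}\bigl[\tfrac16 f(a+\eta(b,a))-(\tfrac12-\tfrac56)f(\tfrac{2a+\eta(b,a)}{2})\bigr]$, and the two values at $t=\tfrac12$ add up to $\tfrac23 f(\tfrac{2a+\eta(b,a)}{2})$; meanwhile the two remainder integrals combine into $-\frac{1}{\eta(b,a)}\int_0^1 f(a+t\eta(b,a))\,dt$. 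Thus
\begin{equation*}
J=\frac{1}{\eta(b,a)}\left[\tfrac16 f(a)+\tfrac23 f\!\left(\tfrac{2a+\eta(b,a)}{2}\right)+\tfrac16 f(a+\eta(b,a))\right]-\frac{1}{\eta(b,a)}\int_0^1 f(a+t\eta(b,a))\,dt.
\end{equation*}

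Finally I would rewrite $\tfrac16 f(a)+\tfrac23 f(\cdot)+\tfrac16 f(a+\eta(b,a))=\tfrac16\bigl[f(a)+4f(\cdot)+f(a+\eta(b,a))\bigr]$ and, in the last integral, substitute $x=a+t\eta(b,a)$, $dx=\eta(b,a)\,dt$, to get $\int_0^1 f(a+t\eta(b,a))\,dt=\frac{1}{\eta(b,a)}\int_a^{a+\eta(b,a)}f(x)\,dx$. Multiplying the resulting expression for $J$ by $\eta(b,a)$ yields exactly the signed identity, hence the Lemma. I do not expect a genuine obstacle here: the only points requiring care are the justification of integration by parts for a merely absolutely continuous $f$ (handled by the regularity remark above) and the bookkeeping of the three boundary terms, where the two contributions at $t=\tfrac12$ must be added with the correct signs.
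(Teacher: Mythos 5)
Your proposal is correct and follows essentially the same route as the paper: split the integral at $t=\tfrac12$, integrate by parts on each piece with $v=\tfrac{1}{\eta(b,a)}f(a+t\eta(b,a))$, collect the boundary terms (the two contributions at $t=\tfrac12$ giving $\tfrac23 f(\tfrac{2a+\eta(b,a)}{2})$), substitute $x=a+t\eta(b,a)$, and multiply by $\eta(b,a)$. Your added care about absolute continuity along the $\eta$-path and the observation that one really proves the signed identity (the absolute value in the statement being incidental) are sensible refinements, not a different method.
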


\begin{proof}
Since $a,b\in I$ and $I$ is an invex set with respect to $\eta ,$ it is
obvious that $a+t\eta \left( b,a\right) \in I$ for $t\in \left[ 0,1\right] $
Integrating by parts implies that%
\begin{eqnarray*}
&&\int_{0}^{\frac{1}{2}}\left( t-\frac{1}{6}\right) f^{\prime }(a+t\eta
(b,a))dt+\int_{\frac{1}{2}}^{1}\left( t-\frac{5}{6}\right) f^{\prime
}(a+t\eta (b,a))dt \\
&=&\left. \left( t-\frac{1}{6}\right) \frac{f(a+t\eta (b,a))}{\eta (b,a)}%
\right\vert _{0}^{\frac{1}{2}}-\int_{0}^{\frac{1}{2}}\frac{f(a+t\eta (b,a))}{%
\eta (b,a)}dt \\
&&+\left. \left( t-\frac{5}{6}\right) \frac{f(a+t\eta (b,a))}{\eta (b,a)}%
\right\vert _{\frac{1}{2}}^{1}-\int_{\frac{1}{2}}^{1}\frac{f(a+t\eta (b,a))}{%
\eta (b,a)}dt \\
&=&\frac{1}{6\eta (b,a)}\left[ f(a)+4f\left( \frac{2a+\eta (b,a)}{2}\right)
+f(a+\eta (b,a))\right] \\
&&-\frac{1}{\eta (b,a)}\left[ \int_{0}^{\frac{1}{2}}f(a+t\eta (b,a))dt+\int_{%
\frac{1}{2}}^{1}f(a+t\eta (b,a))dt\right] .
\end{eqnarray*}%
If we change the variable $x=a+t\eta (b,a)$ and multiply the resulting
equality with $\eta (b,a)$ we get the desired result.
\end{proof}

\begin{theorem}
\label{teo 3.1} Let $I\subseteq 
\mathbb{R}
$ be an open invex subset with respect to $\eta :I\times I\rightarrow 
\mathbb{R}
_{+}$ and $f:I\rightarrow 
\mathbb{R}
$ be an absolutely continuous mapping on $I$,$a,b\in I$ with $\eta \left(
b,a\right) \neq 0.$ If $\left\vert f^{\prime }\right\vert $ is preinvex then
the following inequality holds:%
\begin{eqnarray*}
&&\left\vert \frac{1}{6}\left[ f(a)+4f\left( \frac{2a+\eta (b,a)}{2}\right)
+f(a+\eta (b,a))\right] -\frac{1}{\eta (b,a)}\int_{a}^{a+\eta
(b,a)}f(x)dx\right\vert  \\
&\leq &\frac{5}{72}\eta (b,a)\left[ \left\vert f^{\prime }(a)\right\vert
+\left\vert f^{\prime }(b)\right\vert \right] .
\end{eqnarray*}
\end{theorem}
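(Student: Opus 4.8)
The plan is to derive the inequality directly from the identity in Lemma~\ref{lem 3.1} by passing to absolute values and then invoking the preinvexity of $\left\vert f^{\prime }\right\vert $. First I would apply the triangle inequality, the estimate $\left\vert \int g\right\vert \leq \int \left\vert g\right\vert $, and the fact that $\eta (b,a)>0$ to the identity of Lemma~\ref{lem 3.1}, obtaining
\begin{equation*}
\left\vert \frac{1}{6}\left[ f(a)+4f\left( \frac{2a+\eta (b,a)}{2}\right) +f(a+\eta (b,a))\right] -\frac{1}{\eta (b,a)}\int_{a}^{a+\eta (b,a)}f(x)\,dx\right\vert \leq \eta (b,a)\int_{0}^{1}\left\vert m(t)\right\vert \left\vert f^{\prime }(a+t\eta (b,a))\right\vert \,dt .
\end{equation*}

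Next, since $\left\vert f^{\prime }\right\vert $ is preinvex with respect to $\eta $, the defining inequality gives $\left\vert f^{\prime }(a+t\eta (b,a))\right\vert \leq (1-t)\left\vert f^{\prime }(a)\right\vert +t\left\vert f^{\prime }(b)\right\vert $ for every $t\in [0,1]$. Substituting this into the integrand and splitting the integral, the right-hand side is at most
\begin{equation*}
\eta (b,a)\left[ \left\vert f^{\prime }(a)\right\vert \int_{0}^{1}\left\vert m(t)\right\vert (1-t)\,dt+\left\vert f^{\prime }(b)\right\vert \int_{0}^{1}\left\vert m(t)\right\vert t\,dt\right] ,
\end{equation*}
so the whole statement reduces to checking that each of the two constants equals $\frac{5}{72}$.

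The main (and essentially only) work is evaluating these two integrals, which needs a little care because $m$ changes sign inside each branch of its definition: $\left\vert m(t)\right\vert =\frac{1}{6}-t$ on $[0,\frac{1}{6}]$, $\left\vert m(t)\right\vert =t-\frac{1}{6}$ on $[\frac{1}{6},\frac{1}{2})$, $\left\vert m(t)\right\vert =\frac{5}{6}-t$ on $[\frac{1}{2},\frac{5}{6}]$, and $\left\vert m(t)\right\vert =t-\frac{5}{6}$ on $[\frac{5}{6},1]$. One can either integrate piecewise over these four subintervals, or note that $\left\vert m(1-t)\right\vert =\left\vert m(t)\right\vert $, so that the substitution $t\mapsto 1-t$ shows the two integrals are equal and each equals $\frac{1}{2}\int_{0}^{1}\left\vert m(t)\right\vert \,dt=\frac{1}{2}\cdot \frac{5}{36}=\frac{5}{72}$. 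Plugging these values back in produces exactly $\frac{5}{72}\eta (b,a)\left[ \left\vert f^{\prime }(a)\right\vert +\left\vert f^{\prime }(b)\right\vert \right] $, which is the claim. I expect no conceptual obstacle; the only point requiring attention is keeping the four sign-pieces of $\left\vert m\right\vert $ straight while computing the constants.
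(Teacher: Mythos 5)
Your proposal is correct and takes essentially the same route as the paper: both start from Lemma \ref{lem 3.1}, bound $\left\vert f^{\prime }(a+t\eta (b,a))\right\vert$ by $(1-t)\left\vert f^{\prime }(a)\right\vert +t\left\vert f^{\prime }(b)\right\vert$ using preinvexity, and then evaluate the integrals of $\left\vert m(t)\right\vert$ over the four sign-pieces $[0,\tfrac{1}{6}]$, $[\tfrac{1}{6},\tfrac{1}{2}]$, $[\tfrac{1}{2},\tfrac{5}{6}]$, $[\tfrac{5}{6},1]$. Your symmetry observation $\left\vert m(1-t)\right\vert =\left\vert m(t)\right\vert$ is merely a tidier way of computing the same constants, since $\int_{0}^{1}\left\vert m(t)\right\vert t\,dt=\int_{0}^{1}\left\vert m(t)\right\vert (1-t)\,dt=\tfrac{1}{2}\cdot \tfrac{5}{36}=\tfrac{5}{72}$, which gives exactly the stated bound.
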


\begin{proof}
From Lemma \ref{lem 3.1} and using the preinvexity of $\left\vert f^{\prime
}\right\vert $ we have%
\begin{eqnarray*}
&&\left\vert \frac{1}{6}\left[ f(a)+4f\left( \frac{2a+\eta (b,a)}{2}\right)
+f(a+\eta (b,a))\right] -\frac{1}{\eta (b,a)}\int_{a}^{a+\eta
(b,a)}f(x)dx\right\vert \\
&\leq &\eta (b,a)\left\{ \int_{0}^{\frac{1}{2}}\left\vert t-\frac{1}{6}%
\right\vert \left\vert f^{\prime }(a+t\eta (b,a))\right\vert dt+\int_{\frac{1%
}{2}}^{1}\left\vert t-\frac{5}{6}\right\vert f^{\prime }(a+t\eta
(b,a))dt\right\} \\
&\leq &\eta (b,a)\left\{ \int_{0}^{\frac{1}{6}}\left( \frac{1}{6}-t\right) %
\left[ \left( 1-t\right) \left\vert f^{\prime }(a)\right\vert +t\left\vert
f^{\prime }(b)\right\vert \right] dt\right. \\
&&+\left. \int_{\frac{1}{6}}^{\frac{1}{2}}\left( t-\frac{1}{6}\right) \left[
\left( 1-t\right) \left\vert f^{\prime }(a)\right\vert +t\left\vert
f^{\prime }(b)\right\vert \right] dt\right. \\
&&+\left. \int_{\frac{1}{2}}^{\frac{5}{6}}\left( \frac{5}{6}-t\right) \left[
\left( 1-t\right) \left\vert f^{\prime }(a)\right\vert +t\left\vert
f^{\prime }(b)\right\vert \right] dt\right. \\
&&+\left. \int_{\frac{5}{6}}^{1}\left( t-\frac{5}{6}\right) \left[ \left(
1-t\right) \left\vert f^{\prime }(a)\right\vert +t\left\vert f^{\prime
}(b)\right\vert \right] dt\right. .
\end{eqnarray*}%
If we compute the above integrals, \ we get the desired result.
\end{proof}

\begin{theorem}
\label{teo 3.2} Let $I\subseteq 
\mathbb{R}
$ be an open invex subset with respect to $\eta :I\times I\rightarrow 
\mathbb{R}
_{+}$ and $f:I\rightarrow 
\mathbb{R}
$ be an absolutely continuous mapping on $I$,$a,b\in I$ with $\eta \left(
b,a\right) \neq 0.$ If $\left\vert f^{\prime }\right\vert $ is preinvex for
some fixed $q>1$ then the following inequality holds%
\begin{eqnarray*}
&&\left\vert \frac{1}{6}\left[ f(a)+4f\left( \frac{2a+\eta (b,a)}{2}\right)
+f(a+\eta (b,a))\right] -\frac{1}{\eta (b,a)}\int_{a}^{a+\eta
(b,a)}f(x)dx\right\vert  \\
&\leq &\eta \left( b,a\right) \left( \frac{1+2^{p+1}}{6^{p+1}(p+1)}\right) ^{%
\frac{1}{p}} \\
&&\times \left\{ \left( \frac{3}{8}\left\vert f^{\prime }(a)\right\vert ^{q}+%
\frac{1}{8}\left\vert f^{\prime }(b)\right\vert ^{q}\right) ^{\frac{1}{q}%
}+\left( \frac{1}{8}\left\vert f^{\prime }(a)\right\vert ^{q}+\frac{3}{8}%
\left\vert f^{\prime }(b)\right\vert ^{q}\right) ^{\frac{1}{q}}\right\} 
\end{eqnarray*}%
where $p=\frac{q}{q-1}.$
\end{theorem}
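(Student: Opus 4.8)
The plan is to start from Lemma~\ref{lem 3.1}, pass to absolute values under the integral, and then apply H\"older's inequality on each of the two subintervals $[0,1/2]$ and $[1/2,1]$ separately. Concretely, write
\begin{equation*}
\left\vert \int_0^1 m(t)f^{\prime}(a+t\eta(b,a))\,dt\right\vert
\leq \int_0^{1/2}\left\vert t-\tfrac16\right\vert\left\vert f^{\prime}(a+t\eta(b,a))\right\vert dt
+\int_{1/2}^1\left\vert t-\tfrac56\right\vert\left\vert f^{\prime}(a+t\eta(b,a))\right\vert dt,
\end{equation*}
and to each summand apply H\"older with exponents $p$ and $q$. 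This separates the ``weight'' factor $\bigl(\int|t-\tfrac16|^p\,dt\bigr)^{1/p}$ (respectively with $\tfrac56$) from the factor $\bigl(\int |f^{\prime}(a+t\eta(b,a))|^q\,dt\bigr)^{1/q}$.

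Next I would evaluate the two weight integrals. By symmetry of $t\mapsto|t-\tfrac16|$ about $t=\tfrac16$ on $[0,\tfrac12]$ and of $t\mapsto|t-\tfrac56|$ about $t=\tfrac56$ on $[\tfrac12,1]$, both integrals equal $\int_0^{1/6}s^p\,ds+\int_0^{1/3}s^p\,ds=\frac{(1/6)^{p+1}+(1/3)^{p+1}}{p+1}=\frac{1+2^{p+1}}{6^{p+1}(p+1)}$, which is exactly the common factor appearing in the claimed bound. For the derivative factors I would use the preinvexity of $|f^{\prime}|$: since $|f^{\prime}(a+t\eta(b,a))|^q\leq (1-t)|f^{\prime}(a)|^q+t|f^{\prime}(b)|^q$ (raising the preinvexity inequality to the power $q$ is legitimate because it is an inequality between nonnegative reals and $q>1$ — actually one uses that $|f'|$ preinvex implies $|f'|^q$ preinvex here, or more simply applies preinvexity directly and then H\"older; I would follow whichever the paper intends, namely bounding $|f'(a+t\eta(b,a))|^q$ by the affine expression), integrate over $[0,\tfrac12]$ to get $\tfrac38|f^{\prime}(a)|^q+\tfrac18|f^{\prime}(b)|^q$ and over $[\tfrac12,1]$ to get $\tfrac18|f^{\prime}(a)|^q+\tfrac38|f^{\prime}(b)|^q$. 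Taking $q$-th roots and multiplying through by $\eta(b,a)$ yields the stated inequality.

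The main obstacle, such as it is, is the bookkeeping: one must make sure the H\"older split is applied to each piece with the correct weight ($\tfrac16$ on the first, $\tfrac56$ on the second) and that the elementary integrals $\int_0^{1/2}(1-t)\,dt$, $\int_0^{1/2}t\,dt$, etc., are computed correctly to produce the coefficients $\tfrac38$ and $\tfrac18$. There is also a minor subtlety in justifying $|f^{\prime}(a+t\eta(b,a))|^q\leq(1-t)|f^{\prime}(a)|^q+t|f^{\prime}(b)|^q$ from the hypothesis; the cleanest route is to note the hypothesis should be read as ``$|f^{\prime}|^q$ is preinvex'' (or equivalently to invoke that a nonnegative preinvex function stays preinvex under the convex increasing map $x\mapsto x^q$ composed appropriately), after which everything is routine integration.
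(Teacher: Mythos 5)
Your proposal is correct and follows essentially the same route as the paper's proof: apply Lemma~\ref{lem 3.1}, use H\"older's inequality separately on $[0,\tfrac12]$ and $[\tfrac12,1]$ with the weights $|t-\tfrac16|$ and $|t-\tfrac56|$, evaluate the weight integrals to $\frac{1+2^{p+1}}{6^{p+1}(p+1)}$, and bound the derivative factors via preinvexity of $|f'|^q$ to obtain the coefficients $\tfrac38$ and $\tfrac18$. Even the subtlety you flag about the hypothesis is handled the same way as in the paper, whose proof simply invokes preinvexity of $|f'|^q$ despite the statement saying $|f'|$ is preinvex.
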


\begin{proof}
From Lemma \ref{lem 3.1} and using the H\"{o}lder inequality, we have 
\begin{eqnarray*}
&&\left\vert \frac{1}{6}\left[ f(a)+4f\left( \frac{2a+\eta (b,a)}{2}\right)
+f(a+\eta (b,a))\right] -\frac{1}{\eta (b,a)}\int_{a}^{a+\eta
(b,a)}f(x)dx\right\vert \\
&\leq &\eta (b,a)\left\{ \left( \int_{0}^{\frac{1}{2}}\left\vert t-\frac{1}{6%
}\right\vert ^{p}dt\right) ^{\frac{1}{p}}\left( \int_{0}^{\frac{1}{2}%
}\left\vert f^{\prime }(a+t\eta (b,a))\right\vert ^{q}dt\right) ^{\frac{1}{q}%
}\right. \\
&&+\left. \left( \int_{\frac{1}{2}}^{1}\left\vert t-\frac{5}{6}\right\vert
^{p}dt\right) ^{\frac{1}{p}}\left( \int_{\frac{1}{2}}^{1}\left\vert
f^{\prime }(a+t\eta (b,a))\right\vert ^{q}dt\right) ^{\frac{1}{q}}\right\} .
\end{eqnarray*}%
Since $\left\vert f^{\prime }\right\vert ^{q}$ is preinvex, we obtain 
\begin{eqnarray*}
&&\left\vert \frac{1}{6}\left[ f(a)+4f\left( \frac{2a+\eta (b,a)}{2}\right)
+f(a+\eta (b,a))\right] -\frac{1}{\eta (b,a)}\int_{a}^{a+\eta
(b,a)}f(x)dx\right\vert \\
&\leq &\eta (b,a)\left\{ \left( \int_{0}^{\frac{1}{6}}\left( \frac{1}{6}%
-t\right) ^{p}dt+\int_{\frac{1}{6}}^{\frac{1}{2}}\left( t-\frac{1}{6}\right)
^{p}dt\right) ^{\frac{1}{p}}\right. \\
&&\times \left. \left( \int_{0}^{\frac{1}{2}}\left[ \left( 1-t\right)
\left\vert f^{\prime }(a)\right\vert ^{q}+t\left\vert f^{\prime
}(b)\right\vert ^{q}\right] dt\right) ^{\frac{1}{q}}\right. \\
&&+\left. \left( \int_{\frac{1}{2}}^{\frac{5}{6}}\left( \frac{5}{6}-t\right)
^{p}dt+\int_{\frac{5}{6}}^{1}\left( t-\frac{5}{6}\right) ^{p}dt\right) ^{%
\frac{1}{p}}\right. \\
&&\times \left. \left( \int_{\frac{1}{2}}^{1}\left[ \left( 1-t\right)
\left\vert f^{\prime }(a)\right\vert ^{q}+t\left\vert f^{\prime
}(b)\right\vert ^{q}\right] dt\right) ^{\frac{1}{q}}\right\} \\
&=&\eta (b,a)\left( \frac{1+2^{p+1}}{6^{p+1}(p+1)}\right) ^{\frac{1}{p}} \\
&&\times \left\{ \left( \frac{3}{8}\left\vert f^{\prime }(a)\right\vert ^{q}+%
\frac{1}{8}\left\vert f^{\prime }(b)\right\vert ^{q}\right) ^{\frac{1}{q}%
}+\left( \frac{1}{8}\left\vert f^{\prime }(a)\right\vert ^{q}+\frac{3}{8}%
\left\vert f^{\prime }(b)\right\vert ^{q}\right) ^{\frac{1}{q}}\right\} .
\end{eqnarray*}%
The proof is completed.
\end{proof}

\begin{theorem}
\label{teo 3.3} Under the assumptions of Theorem \ref{teo 3.2}, we have the
following inequality%
\begin{eqnarray*}
&&\left\vert \frac{1}{6}\left[ f(a)+4f\left( \frac{2a+\eta (b,a)}{2}\right)
+f(a+\eta (b,a))\right] -\frac{1}{\eta (b,a)}\int_{a}^{a+\eta
(b,a)}f(x)dx\right\vert \\
&\leq &\eta \left( b,a\right) \left( \frac{2(1+2^{p+1})}{6^{p+1}(p+1)}%
\right) ^{\frac{1}{p}}\left[ \frac{\left\vert f^{\prime }(a)\right\vert
^{q}+\left\vert f^{\prime }(b)\right\vert ^{q}}{2}\right] ^{\frac{1}{q}}.
\end{eqnarray*}
\end{theorem}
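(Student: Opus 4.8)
The plan is to deduce the inequality directly from the bound already obtained in Theorem~\ref{teo 3.2}, by merging the two $\tfrac1q$-th powers appearing on its right-hand side into a single one. (Equivalently one could re-run the H\"older step from Lemma~\ref{lem 3.1}: apply H\"older on $\left[0,\tfrac12\right]$ and $\left[\tfrac12,1\right]$, then, instead of bounding the two resulting integrals of $\left\vert f'\right\vert^{q}$ separately, add them to get $\int_{0}^{1}\left\vert f'(a+t\eta(b,a))\right\vert^{q}\,dt\le\frac{\left\vert f'(a)\right\vert^{q}+\left\vert f'(b)\right\vert^{q}}{2}$ by preinvexity, and finish as below. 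Both routes yield the same constant.)

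First I would write $A:=\left(\frac{1+2^{p+1}}{6^{p+1}(p+1)}\right)^{1/p}$ and set
\[
M_{1}:=\tfrac38\left\vert f'(a)\right\vert^{q}+\tfrac18\left\vert f'(b)\right\vert^{q},\qquad M_{2}:=\tfrac18\left\vert f'(a)\right\vert^{q}+\tfrac38\left\vert f'(b)\right\vert^{q},
\]
so that the conclusion of Theorem~\ref{teo 3.2} reads $(\text{LHS})\le\eta(b,a)\,A\left(M_{1}^{1/q}+M_{2}^{1/q}\right)$, while $M_{1}+M_{2}=\frac{\left\vert f'(a)\right\vert^{q}+\left\vert f'(b)\right\vert^{q}}{2}$.

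Next, since $q>1$ the exponent $\tfrac1q$ lies in $(0,1)$, so $x\mapsto x^{1/q}$ is concave on $[0,\infty)$; applying this to $M_{1},M_{2}$ gives $\tfrac12\!\left(M_{1}^{1/q}+M_{2}^{1/q}\right)\le\left(\tfrac{M_{1}+M_{2}}{2}\right)^{1/q}$, that is, $M_{1}^{1/q}+M_{2}^{1/q}\le 2^{1-1/q}(M_{1}+M_{2})^{1/q}$. Substituting the value of $M_{1}+M_{2}$, using $1-\tfrac1q=\tfrac1p$, and noting $2^{1/p}A=\left(\frac{2(1+2^{p+1})}{6^{p+1}(p+1)}\right)^{1/p}$, one lands exactly on the claimed bound.

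I do not expect a genuine obstacle here: there is no new analytic content beyond Theorem~\ref{teo 3.2}. The only point that needs care is the exponent bookkeeping — recognizing that the elementary power-mean estimate $a^{1/q}+b^{1/q}\le 2^{1-1/q}(a+b)^{1/q}$ (valid for $a,b\ge0$ and $q>1$) is the right tool, and that the factor $2^{1-1/q}=2^{1/p}$ combines with $A$ to produce precisely the constant in the statement.
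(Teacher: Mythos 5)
Your argument is correct, but it takes a genuinely different route from the paper. The paper does not pass through Theorem \ref{teo 3.2} at all: it returns to Lemma \ref{lem 3.1} and applies H\"{o}lder's inequality once over the whole interval $[0,1]$ with the kernel $m(t)$, giving $\int_{0}^{1}\left\vert m(t)\right\vert \left\vert f^{\prime }(a+t\eta (b,a))\right\vert dt\leq \left( \int_{0}^{1}\left\vert m(t)\right\vert ^{p}dt\right) ^{1/p}\left( \int_{0}^{1}\left\vert f^{\prime }(a+t\eta (b,a))\right\vert ^{q}dt\right) ^{1/q}$; it then uses preinvexity of $\left\vert f^{\prime }\right\vert ^{q}$ to get $\int_{0}^{1}\left[ (1-t)\left\vert f^{\prime }(a)\right\vert ^{q}+t\left\vert f^{\prime }(b)\right\vert ^{q}\right] dt=\frac{\left\vert f^{\prime }(a)\right\vert ^{q}+\left\vert f^{\prime }(b)\right\vert ^{q}}{2}$ and the computation $\int_{0}^{1}\left\vert m(t)\right\vert ^{p}dt=\frac{2(1+2^{p+1})}{6^{p+1}(p+1)}$. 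You instead treat Theorem \ref{teo 3.3} as a formal consequence of Theorem \ref{teo 3.2}, via the concavity estimate $M_{1}^{1/q}+M_{2}^{1/q}\leq 2^{1-1/q}(M_{1}+M_{2})^{1/q}$ with $M_{1}+M_{2}=\frac{\left\vert f^{\prime }(a)\right\vert ^{q}+\left\vert f^{\prime }(b)\right\vert ^{q}}{2}$ and $2^{1-1/q}=2^{1/p}$; the bookkeeping is right and the constants match exactly. What your route buys is the explicit observation that Theorem \ref{teo 3.3} is weaker than Theorem \ref{teo 3.2} (the latter always implies the former), which the paper never states; what the paper's route buys is a self-contained one-step proof from the lemma that does not depend on the earlier theorem. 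One small caution: your parenthetical alternative (H\"{o}lder separately on $[0,\tfrac12]$ and $[\tfrac12,1]$, then ``add the integrals'') is not literally a single addition --- after splitting you still need the same concavity step to merge the two $1/q$-powers --- but since the two $p$-integrals are equal this reduces to your main argument anyway.
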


\begin{proof}
From Lemma \ref{lem 3.1}, preinvexity of $\left\vert f^{\prime }\right\vert
^{q}$ and using the H\"{o}lder inequality, we have 
\begin{eqnarray*}
&&\left\vert \frac{1}{6}\left[ f(a)+4f\left( \frac{2a+\eta (b,a)}{2}\right)
+f(a+\eta (b,a))\right] -\frac{1}{\eta (b,a)}\int_{a}^{a+\eta
(b,a)}f(x)dx\right\vert \\
&\leq &\eta (b,a)\left[ \int_{0}^{1}\left\vert m(t)\right\vert \left\vert
f^{\prime }(a+t\eta (b,a))\right\vert dt\right] \\
&\leq &\eta (b,a)\left( \int_{0}^{1}\left\vert m(t)\right\vert ^{p}dt\right)
^{\frac{1}{p}}\left( \int_{0}^{1}\left\vert f^{\prime }(a+t\eta
(b,a))\right\vert ^{q}dt\right) ^{\frac{1}{q}} \\
&\leq &\eta (b,a)\left( \int_{0}^{\frac{1}{2}}\left\vert t-\frac{1}{6}%
\right\vert ^{p}dt+\int_{\frac{1}{2}}^{1}\left\vert t-\frac{5}{6}\right\vert
^{p}dt\right) ^{\frac{1}{p}}\left( \int_{0}^{1}\left[ \left( 1-t\right)
\left\vert f^{\prime }(a)\right\vert ^{q}+t\left\vert f^{\prime
}(b)\right\vert ^{q}\right] dt\right) ^{\frac{1}{q}} \\
&=&\eta (b,a)\left( \frac{2(1+2^{p+1})}{6^{p+1}(p+1)}\right) ^{\frac{1}{p}}%
\left[ \frac{\left\vert f^{\prime }(a)\right\vert ^{q}+\left\vert f^{\prime
}(b)\right\vert ^{q}}{2}\right] ^{\frac{1}{q}}
\end{eqnarray*}%
where we used the fact that%
\begin{equation*}
\int_{0}^{\frac{1}{2}}\left\vert t-\frac{1}{6}\right\vert ^{p}dt=\int_{\frac{%
1}{2}}^{1}\left\vert t-\frac{5}{6}\right\vert ^{p}dt=\frac{(1+2^{p+1})}{%
6^{p+1}(p+1)}.
\end{equation*}%
The proof is completed.
\end{proof}

\begin{theorem}
\label{teo 3.4} Let $I\subseteq 
\mathbb{R}
$ be an open invex subset with respect to $\eta :I\times I\rightarrow 
\mathbb{R}
_{+}$ and $f:I\rightarrow 
\mathbb{R}
$ be an absolutely continuous mapping on $I$,$a,b\in I$ with $\eta \left(
b,a\right) \neq 0.$ If $\left\vert f^{\prime }\right\vert $ is preinvex for
some fixed $q\geq 1$ then the following inequality holds%
\begin{eqnarray*}
&&\left\vert \frac{1}{6}\left[ f(a)+4f\left( \frac{2a+\eta (b,a)}{2}\right)
+f(a+\eta (b,a))\right] -\frac{1}{\eta (b,a)}\int_{a}^{a+\eta
(b,a)}f(x)dx\right\vert  \\
&\leq &\eta \left( b,a\right) \left( \frac{5}{72}\right) ^{1-\frac{1}{q}} \\
&&\times \left\{ \left( \frac{61\left\vert f^{\prime }(a)\right\vert
^{q}+29\left\vert f^{\prime }(b)\right\vert ^{q}}{1296}\right) ^{\frac{1}{q}%
}+\left( \frac{29\left\vert f^{\prime }(a)\right\vert ^{q}+61\left\vert
f^{\prime }(b)\right\vert ^{q}}{1296}\right) ^{\frac{1}{q}}\right\} .
\end{eqnarray*}
\end{theorem}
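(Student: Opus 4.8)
The plan is to mimic the structure of the proofs of Theorems \ref{teo 3.1}--\ref{teo 3.3}, but now combine the power-mean (i.e.\ weighted Hölder) inequality with the piecewise structure of $m(t)$. Starting from Lemma \ref{lem 3.1}, I would first bound the left-hand side by $\eta(b,a)\int_0^1 |m(t)|\,|f'(a+t\eta(b,a))|\,dt$ and split this at $t=\tfrac12$ into the two integrals over $[0,\tfrac12]$ and $[\tfrac12,1]$ where $|m(t)|=|t-\tfrac16|$ and $|m(t)|=|t-\tfrac56|$ respectively.

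On each of the two pieces I would apply the power-mean inequality in the form $\int w\,g \le (\int w)^{1-1/q}(\int w\,g^q)^{1/q}$ with weight $w=|m(t)|$ and $g=|f'(a+t\eta(b,a))|$. The crucial preliminary computation is the weight mass on each piece: $\int_0^{1/2}|t-\tfrac16|\,dt=\int_{1/2}^1|t-\tfrac56|\,dt=\tfrac{5}{72}$ (each equals half of the total $\tfrac5{36}$, by symmetry of $m$ about $t=\tfrac12$), which explains the factor $(\tfrac5{72})^{1-1/q}$ pulled out front. Then, using preinvexity of $|f'|^q$, i.e.\ $|f'(a+t\eta(b,a))|^q\le(1-t)|f'(a)|^q+t|f'(b)|^q$, each remaining factor becomes $\bigl(\int |m(t)|[(1-t)|f'(a)|^q+t|f'(b)|^q]\,dt\bigr)^{1/q}$ over the appropriate subinterval. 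So the whole matter reduces to evaluating the four elementary integrals $\int_0^{1/2}|t-\tfrac16|(1-t)\,dt$, $\int_0^{1/2}|t-\tfrac16|\,t\,dt$, $\int_{1/2}^1|t-\tfrac56|(1-t)\,dt$, $\int_{1/2}^1|t-\tfrac56|\,t\,dt$, each of which needs to be split further at $t=\tfrac16$ (resp.\ $t=\tfrac56$) to remove the absolute value.

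The expected arithmetic is that $\int_0^{1/2}|t-\tfrac16|(1-t)\,dt=\tfrac{61}{1296}$ and $\int_0^{1/2}|t-\tfrac16|\,t\,dt=\tfrac{29}{1296}$, with the mirrored values $\tfrac{29}{1296}$ and $\tfrac{61}{1296}$ on $[\tfrac12,1]$ by the substitution $t\mapsto 1-t$ (this also provides a consistency check, since $\tfrac{61}{1296}+\tfrac{29}{1296}=\tfrac{90}{1296}=\tfrac{5}{72}$, matching the weight mass above). Substituting these gives exactly the asserted bound $\eta(b,a)\bigl(\tfrac5{72}\bigr)^{1-1/q}\Bigl[\bigl(\tfrac{61|f'(a)|^q+29|f'(b)|^q}{1296}\bigr)^{1/q}+\bigl(\tfrac{29|f'(a)|^q+61|f'(b)|^q}{1296}\bigr)^{1/q}\Bigr]$. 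The only real obstacle is bookkeeping: correctly handling the two nested case-splits (first at $\tfrac12$ for $|m(t)|$, then at $\tfrac16$ and $\tfrac56$ inside each piece) and getting the rational constants $\tfrac{5}{72}$, $\tfrac{61}{1296}$, $\tfrac{29}{1296}$ right; there is no conceptual difficulty once the power-mean inequality is applied piecewise. For $q=1$ the inequality should collapse back to Theorem \ref{teo 3.1}, which is a useful sanity check since $\tfrac{61+29}{1296}=\tfrac{90}{1296}=\tfrac{5}{72}$.
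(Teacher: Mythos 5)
Your proposal is correct and follows essentially the same route as the paper's proof: apply Lemma \ref{lem 3.1}, split at $t=\tfrac12$, use the power-mean inequality with weight $|m(t)|$ on each half (with $\int_0^{1/2}|t-\tfrac16|\,dt=\tfrac5{72}$), and then bound $|f'(a+t\eta(b,a))|^q$ by preinvexity of $|f'|^q$, which yields exactly the constants $\tfrac{61}{1296}$ and $\tfrac{29}{1296}$. The arithmetic you anticipate checks out, so no gap remains.
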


\begin{proof}
From Lemma \ref{lem 3.1} and using the power-mean inequality, we have%
\begin{eqnarray*}
&&\left\vert \frac{1}{6}\left[ f(a)+4f\left( \frac{2a+\eta (b,a)}{2}\right)
+f(a+\eta (b,a))\right] -\frac{1}{\eta (b,a)}\int_{a}^{a+\eta
(b,a)}f(x)dx\right\vert \\
&\leq &\eta \left( b,a\right) \\
&&\times \left\{ \left( \int_{0}^{\frac{1}{2}}\left\vert t-\frac{1}{6}%
\right\vert dt\right) ^{1-\frac{1}{q}}\left( \int_{0}^{\frac{1}{2}%
}\left\vert t-\frac{1}{6}\right\vert \left\vert f^{\prime }(a+t\eta \left(
b,a\right) )\right\vert ^{q}dt\right) ^{\frac{1}{q}}\right. \\
&&+\left. \left( \int_{\frac{1}{2}}^{1}\left\vert t-\frac{5}{6}\right\vert
dt\right) ^{1-\frac{1}{q}}\left( \int_{\frac{1}{2}}^{1}\left\vert t-\frac{5}{%
6}\right\vert \left\vert f^{\prime }(a+t\eta \left( b,a\right) )\right\vert
^{q}dt\right) ^{\frac{1}{q}}\right\} .
\end{eqnarray*}%
Since $\left\vert f^{\prime }\right\vert ^{q}$ is preinvex function we have%
\begin{eqnarray*}
&&\int_{0}^{\frac{1}{2}}\left\vert t-\frac{1}{6}\right\vert \left\vert
f^{\prime }(a+t\eta \left( b,a\right) )\right\vert ^{q}dt \\
&\leq &\int_{0}^{\frac{1}{6}}\left( \frac{1}{6}-t\right) \left[ \left(
1-t\right) \left\vert f^{\prime }(a)\right\vert ^{q}+t\left\vert f^{\prime
}(b)\right\vert ^{q}\right] dt \\
&&+\int_{\frac{1}{6}}^{\frac{1}{2}}\left( t-\frac{1}{6}\right) \left[ \left(
1-t\right) \left\vert f^{\prime }(a)\right\vert ^{q}+t\left\vert f^{\prime
}(b)\right\vert ^{q}\right] dt \\
&=&\frac{61\left\vert f^{\prime }(a)\right\vert ^{q}+29\left\vert f^{\prime
}(b)\right\vert ^{q}}{1296}
\end{eqnarray*}%
and 
\begin{eqnarray*}
&&\int_{\frac{1}{2}}^{1}\left\vert t-\frac{5}{6}\right\vert \left\vert
f^{\prime }(a+t\eta \left( b,a\right) )\right\vert ^{q}dt \\
&\leq &\int_{\frac{1}{2}}^{\frac{5}{6}}\left( \frac{5}{6}-t\right) \left[
\left( 1-t\right) \left\vert f^{\prime }(a)\right\vert ^{q}+t\left\vert
f^{\prime }(b)\right\vert ^{q}\right] dt \\
&&+\int_{\frac{5}{6}}^{1}\left( t-\frac{5}{6}\right) \left[ \left(
1-t\right) \left\vert f^{\prime }(a)\right\vert ^{q}+t\left\vert f^{\prime
}(b)\right\vert ^{q}\right] dt \\
&=&\frac{29\left\vert f^{\prime }(a)\right\vert ^{q}+61\left\vert f^{\prime
}(b)\right\vert ^{q}}{1296}.
\end{eqnarray*}%
Combining all the above inequalities gives us the desired result.
\end{proof}

\section{Simpson type inequalities for prequasiinvex functions}

In this section, we obtained Simpson type inequalities for prequasiinvex
functions.

\begin{theorem}
\label{teo 4.1} Let $I\subseteq 
\mathbb{R}
$ be an open invex subset with respect to $\eta :I\times I\rightarrow 
\mathbb{R}
_{+}$ and $f:I\rightarrow 
\mathbb{R}
$ be an absolutely continuous mapping on $I$,$a,b\in I$ with $\eta \left(
b,a\right) \neq 0.$ If $\left\vert f^{\prime }\right\vert $ is prequasiinvex
for some fixed $q\geq 1$ then the following inequality holds%
\begin{eqnarray*}
&&\left\vert \frac{1}{6}\left[ f(a)+4f\left( \frac{2a+\eta (b,a)}{2}\right)
+f(a+\eta (b,a))\right] -\frac{1}{\eta (b,a)}\int_{a}^{a+\eta
(b,a)}f(x)dx\right\vert  \\
&\leq &\frac{5}{36}\eta \left( b,a\right) \left[ \max \left\{ \left\vert
f^{\prime }(a)\right\vert ^{q},\left\vert f^{\prime }(b)\right\vert
^{q}\right\} \right] ^{\frac{1}{q}}.
\end{eqnarray*}
\end{theorem}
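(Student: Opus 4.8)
The plan is to follow the same template used for Theorems \ref{teo 3.1}--\ref{teo 3.4}: start from the identity in Lemma \ref{lem 3.1}, pass to absolute values under the integral sign, apply the power-mean inequality, and then exploit prequasiinvexity. Concretely, Lemma \ref{lem 3.1} gives
\[
\left\vert \tfrac{1}{6}\left[ f(a)+4f\!\left( \tfrac{2a+\eta (b,a)}{2}\right) +f(a+\eta (b,a))\right] -\tfrac{1}{\eta (b,a)}\int_{a}^{a+\eta (b,a)}f(x)\,dx\right\vert \leq \eta (b,a)\int_{0}^{1}\left\vert m(t)\right\vert \,\left\vert f^{\prime }(a+t\eta (b,a))\right\vert \,dt .
\]
I would then write $\left\vert m(t)\right\vert \left\vert f^{\prime }(a+t\eta (b,a))\right\vert =\left\vert m(t)\right\vert ^{1-\frac{1}{q}}\cdot \left\vert m(t)\right\vert ^{\frac{1}{q}}\left\vert f^{\prime }(a+t\eta (b,a))\right\vert$ and apply Hölder's inequality with exponents $\frac{q}{q-1}$ and $q$ (the power-mean inequality, legitimate since $q\geq 1$; for $q=1$ this step is vacuous), obtaining the bound
\[
\eta (b,a)\left( \int_{0}^{1}\left\vert m(t)\right\vert \,dt\right) ^{1-\frac{1}{q}}\left( \int_{0}^{1}\left\vert m(t)\right\vert \left\vert f^{\prime }(a+t\eta (b,a))\right\vert ^{q}\,dt\right) ^{\frac{1}{q}} .
\]

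Next I would observe that $\left\vert f^{\prime }\right\vert$ prequasiinvex forces $\left\vert f^{\prime }\right\vert ^{q}$ prequasiinvex: since $s\mapsto s^{q}$ is nondecreasing on $[0,\infty )$ for $q\geq 1$, the defining inequality for prequasiinvexity yields $\left\vert f^{\prime }(a+t\eta (b,a))\right\vert ^{q}\leq \big(\max \{\left\vert f^{\prime }(a)\right\vert ,\left\vert f^{\prime }(b)\right\vert \}\big)^{q}=\max \{\left\vert f^{\prime }(a)\right\vert ^{q},\left\vert f^{\prime }(b)\right\vert ^{q}\}$ for every $t\in [0,1]$. Substituting this into the second factor, the maximum pulls out of the integral, and the two factors then recombine (same integral $\int_{0}^{1}\left\vert m(t)\right\vert \,dt$ to the powers $1-\frac1q$ and $\frac1q$) into
\[
\eta (b,a)\left( \int_{0}^{1}\left\vert m(t)\right\vert \,dt\right) \left[ \max \{\left\vert f^{\prime }(a)\right\vert ^{q},\left\vert f^{\prime }(b)\right\vert ^{q}\}\right] ^{\frac{1}{q}} .
\]

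It then remains to evaluate $\int_{0}^{1}\left\vert m(t)\right\vert \,dt$. Splitting the interval at the zeros of $m$, namely $t=\tfrac{1}{6}$ and $t=\tfrac{5}{6}$, and using the sign of $m$ on each piece gives $\int_{0}^{1/2}\left\vert t-\tfrac{1}{6}\right\vert \,dt=\tfrac{1}{72}+\tfrac{1}{18}=\tfrac{5}{72}$, and by the symmetry $\left\vert m(1-t)\right\vert =\left\vert m(t)\right\vert$ also $\int_{1/2}^{1}\left\vert t-\tfrac{5}{6}\right\vert \,dt=\tfrac{5}{72}$, so $\int_{0}^{1}\left\vert m(t)\right\vert \,dt=\tfrac{5}{36}$. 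Plugging this in produces exactly the claimed constant $\tfrac{5}{36}$. The only genuinely conceptual point is the preservation of prequasiinvexity under the monotone map $s\mapsto s^{q}$; everything else is the elementary integral bookkeeping already familiar from the preinvex case.
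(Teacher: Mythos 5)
Your proposal is correct and follows essentially the same route as the paper: Lemma \ref{lem 3.1}, the power-mean (H\"{o}lder) inequality, prequasiinvexity of $\left\vert f^{\prime }\right\vert ^{q}$ (which you justify, a bit more carefully than the paper, via monotonicity of $s\mapsto s^{q}$), and the evaluation $\int_{0}^{1}\left\vert m(t)\right\vert dt=\frac{5}{36}$. The only cosmetic difference is that you apply the power-mean step once over $[0,1]$ while the paper applies it separately on $\left[0,\frac{1}{2}\right]$ and $\left[\frac{1}{2},1\right]$; since the prequasiinvex bound is a constant, both give the same constant $\frac{5}{36}$.
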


\begin{proof}
From Lemma \ref{lem 3.1} and using the power-mean inequality, we have%
\begin{eqnarray*}
&&\left\vert \frac{1}{6}\left[ f(a)+4f\left( \frac{2a+\eta (b,a)}{2}\right)
+f(a+\eta (b,a))\right] -\frac{1}{\eta (b,a)}\int_{a}^{a+\eta
(b,a)}f(x)dx\right\vert \\
&\leq &\eta \left( b,a\right) \\
&&\times \left\{ \left( \int_{0}^{\frac{1}{2}}\left\vert t-\frac{1}{6}%
\right\vert dt\right) ^{1-\frac{1}{q}}\left( \int_{0}^{\frac{1}{2}%
}\left\vert t-\frac{1}{6}\right\vert \left\vert f^{\prime }(a+t\eta \left(
b,a\right) )\right\vert ^{q}dt\right) ^{\frac{1}{q}}\right. \\
&&+\left. \left( \int_{\frac{1}{2}}^{1}\left\vert t-\frac{5}{6}\right\vert
dt\right) ^{1-\frac{1}{q}}\left( \int_{\frac{1}{2}}^{1}\left\vert t-\frac{5}{%
6}\right\vert \left\vert f^{\prime }(a+t\eta \left( b,a\right) )\right\vert
^{q}dt\right) ^{\frac{1}{q}}\right\} .
\end{eqnarray*}%
Since $\left\vert f^{\prime }\right\vert ^{q}$ is prequasiinvex function we
have$\ \ \ $

\begin{eqnarray*}
&&\int_{0}^{\frac{1}{2}}\left\vert t-\frac{1}{6}\right\vert \left\vert
f^{\prime }(a+t\eta \left( b,a\right) )\right\vert ^{q}dt \\
&\leq &\int_{0}^{\frac{1}{6}}\left( \frac{1}{6}-t\right) \left[ \max \left\{
\left\vert f^{\prime }(a)\right\vert ^{q},\left\vert f^{\prime
}(b)\right\vert ^{q}\right\} \right] dt \\
&&+\int_{\frac{1}{6}}^{\frac{1}{2}}\left( t-\frac{1}{6}\right) \left[ \max
\left\{ \left\vert f^{\prime }(a)\right\vert ^{q},\left\vert f^{\prime
}(b)\right\vert ^{q}\right\} \right] dt \\
&=&\frac{5}{72}\left[ \max \left\{ \left\vert f^{\prime }(a)\right\vert
^{q},\left\vert f^{\prime }(b)\right\vert ^{q}\right\} \right]
\end{eqnarray*}%
and 
\begin{eqnarray*}
&&\int_{\frac{1}{2}}^{1}\left\vert t-\frac{5}{6}\right\vert \left\vert
f^{\prime }(a+t\eta \left( b,a\right) )\right\vert ^{q}dt \\
&\leq &\int_{\frac{1}{2}}^{\frac{5}{6}}\left( \frac{5}{6}-t\right) \left[
\max \left\{ \left\vert f^{\prime }(a)\right\vert ^{q},\left\vert f^{\prime
}(b)\right\vert ^{q}\right\} \right] dt \\
&&+\int_{\frac{5}{6}}^{1}\left( t-\frac{5}{6}\right) \left[ \max \left\{
\left\vert f^{\prime }(a)\right\vert ^{q},\left\vert f^{\prime
}(b)\right\vert ^{q}\right\} \right] dt \\
&=&\frac{5}{72}\left[ \max \left\{ \left\vert f^{\prime }(a)\right\vert
^{q},\left\vert f^{\prime }(b)\right\vert ^{q}\right\} \right] .
\end{eqnarray*}%
From the above inequalities we get the desired result.
\end{proof}

\begin{corollary}
\label{co 4.1} In Theorem \ref{teo 4.1}, if we choose $q=1$ we obtain
\end{corollary}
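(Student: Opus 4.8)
The plan is to read this off as the immediate specialization of Theorem \ref{teo 4.1} to the exponent $q=1$. When $q=1$ the outer power $1-\frac{1}{q}$ vanishes, so the two factors coming from the power-mean step degenerate to $1$, and the inner factors $\left[\max\{|f'(a)|^{q},|f'(b)|^{q}\}\right]^{1/q}$ become simply $\max\{|f'(a)|,|f'(b)|\}$. Hence the right-hand side of Theorem \ref{teo 4.1} collapses to $\frac{5}{36}\,\eta(b,a)\,\max\{|f'(a)|,|f'(b)|\}$, which is exactly the asserted inequality. No new estimate is required; I would simply write "Setting $q=1$ in Theorem \ref{teo 4.1} yields the claim."

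As an independent check (and an alternative route that does not invoke the power-mean inequality at all), I would start directly from Lemma \ref{lem 3.1}. Taking absolute values inside the integral and using prequasiinvexity of $|f'|$ to bound $|f'(a+t\eta(b,a))| \le \max\{|f'(a)|,|f'(b)|\}$ uniformly in $t\in[0,1]$, one pulls the constant out and is left with $\eta(b,a)\,\max\{|f'(a)|,|f'(b)|\}\int_0^1 |m(t)|\,dt$. So it suffices to evaluate $\int_0^1|m(t)|\,dt = \int_0^{1/2}\bigl|t-\tfrac16\bigr|\,dt + \int_{1/2}^1\bigl|t-\tfrac56\bigr|\,dt$.

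The only computational point is this last elementary integral: split $\int_0^{1/2}|t-\tfrac16|\,dt$ at $t=\tfrac16$ into $\int_0^{1/6}(\tfrac16-t)\,dt + \int_{1/6}^{1/2}(t-\tfrac16)\,dt = \tfrac5{72}$, and by the mirror symmetry about $t=\tfrac12$ the integral over $[\tfrac12,1]$ contributes another $\tfrac5{72}$, giving $\int_0^1|m(t)|\,dt = \tfrac5{36}$. This matches the constant $\tfrac{5}{72}+\tfrac{5}{72}$ already appearing in the proof of Theorem \ref{teo 4.1}, so there is effectively no obstacle — the corollary is a one-line consequence and the alternative derivation is entirely routine.
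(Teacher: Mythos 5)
Your proposal is correct and matches the paper's (implicit) argument exactly: the corollary is obtained by substituting $q=1$ into Theorem \ref{teo 4.1}, under which the power-mean factors disappear and the bound reduces to $\frac{5}{36}\,\eta(b,a)\max\{\vert f'(a)\vert,\vert f'(b)\vert\}$. Your independent check via Lemma \ref{lem 3.1} and $\int_{0}^{1}\vert m(t)\vert\,dt=\frac{5}{36}$ is also correct, but it is not needed beyond confirming the constant.
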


\begin{eqnarray*}
&&\left\vert \frac{1}{6}\left[ f(a)+4f\left( \frac{2a+\eta (b,a)}{2}\right)
+f(a+\eta (b,a))\right] -\frac{1}{\eta (b,a)}\int_{a}^{a+\eta
(b,a)}f(x)dx\right\vert \\
&\leq &\frac{5}{36}\eta \left( b,a\right) \max \left\{ \left\vert f^{\prime
}(a)\right\vert ,\left\vert f^{\prime }(b)\right\vert \right\} .
\end{eqnarray*}

\begin{corollary}
\label{co 4.2} In Theorem \ref{teo 4.1}, if we choose $f(a)=f\left( \frac{%
2a+\eta (b,a)}{2}\right) =f(a+\eta (b,a))$ we obtain the midpoint type
inequality as follows:
\end{corollary}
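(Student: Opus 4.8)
The plan is to specialize Theorem~\ref{teo 4.1} under the stated hypothesis; no new analysis is needed. Write $c=a+\eta(b,a)$ for the endpoint of the $\eta$-path $P_{ac}$ and $m=\frac{2a+\eta(b,a)}{2}=a+\frac{1}{2}\eta(b,a)$ for its midpoint, so that the assumption of the corollary reads $f(a)=f(m)=f(c)$. The corollary then asserts the midpoint-type bound obtained by replacing the Simpson combination by $f(m)$ in Theorem~\ref{teo 4.1}.

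First I would observe that under this assumption the Simpson combination on the left-hand side of the inequality in Theorem~\ref{teo 4.1} collapses to a single term:
\[
\frac{1}{6}\left[f(a)+4f(m)+f(c)\right]=\frac{1}{6}\left[f(m)+4f(m)+f(m)\right]=f(m)=f\!\left(\frac{2a+\eta(b,a)}{2}\right).
\]
Substituting this identity into the inequality of Theorem~\ref{teo 4.1}, the left-hand side becomes
\[
\left\vert f\!\left(\frac{2a+\eta(b,a)}{2}\right)-\frac{1}{\eta(b,a)}\int_{a}^{a+\eta(b,a)}f(x)\,dx\right\vert ,
\]
while the right-hand side $\frac{5}{36}\eta(b,a)\left[\max\left\{\left\vert f'(a)\right\vert^{q},\left\vert f'(b)\right\vert^{q}\right\}\right]^{1/q}$ is untouched. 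This is precisely the asserted midpoint-type estimate, so the corollary is immediate.

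The hypotheses required are exactly those of Theorem~\ref{teo 4.1}: $I$ open invex with respect to $\eta:I\times I\to\mathbb{R}_{+}$, $f$ absolutely continuous on $I$, $\eta(b,a)\neq 0$, and $\left\vert f'\right\vert^{q}$ prequasiinvex for some fixed $q\geq 1$; the extra equality $f(a)=f(m)=f(c)$ is imposed only to make the quadrature weights collapse and plays no role in deriving the estimate itself. Consequently there is no genuine obstacle here — the argument is a one-line substitution — and, just as in Corollary~\ref{co 4.1}, one may further take $q=1$ to obtain the cleaner form with $\max\left\{\left\vert f'(a)\right\vert,\left\vert f'(b)\right\vert\right\}$ in place of the $q$-th-power expression.
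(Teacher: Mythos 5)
Your proposal is correct and is exactly the argument the paper intends: under $f(a)=f\left(\frac{2a+\eta(b,a)}{2}\right)=f(a+\eta(b,a))$ the Simpson combination collapses to $f\left(\frac{2a+\eta(b,a)}{2}\right)$ and the bound of Theorem \ref{teo 4.1} carries over verbatim. Note only that the final remark about taking $q=1$ is unnecessary, since $\left[\max\left\{\left\vert f^{\prime}(a)\right\vert^{q},\left\vert f^{\prime}(b)\right\vert^{q}\right\}\right]^{1/q}=\max\left\{\left\vert f^{\prime}(a)\right\vert,\left\vert f^{\prime}(b)\right\vert\right\}$ for every $q\geq 1$, so the stated form of the corollary already holds for all such $q$.
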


\begin{eqnarray*}
&&\left\vert f\left( \frac{2a+\eta (b,a)}{2}\right) -\frac{1}{\eta (b,a)}%
\int_{a}^{a+\eta (b,a)}f(x)dx\right\vert \\
&\leq &\frac{5}{36}\eta \left( b,a\right) \max \left\{ \left\vert f^{\prime
}(a)\right\vert ,\left\vert f^{\prime }(b)\right\vert \right\} .
\end{eqnarray*}

\begin{theorem}
\label{teo 4.2} Let $I\subseteq 
\mathbb{R}
$ be an open invex subset with respect to $\eta :I\times I\rightarrow 
\mathbb{R}
_{+}$ and $f:I\rightarrow 
\mathbb{R}
$ be an absolutely continuous mapping on $I$,$a,b\in I$ with $\eta \left(
b,a\right) \neq 0.$ If $\left\vert f^{\prime }\right\vert $ is prequasiinvex
for some fixed $q>1$ then the following inequality holds%
\begin{eqnarray*}
&&\left\vert \frac{1}{6}\left[ f(a)+4f\left( \frac{2a+\eta (b,a)}{2}\right)
+f(a+\eta (b,a))\right] -\frac{1}{\eta (b,a)}\int_{a}^{a+\eta
(b,a)}f(x)dx\right\vert  \\
&\leq &2\eta (b,a)\left( \frac{1+2^{p+1}}{6^{p+1}(p+1)}\right) ^{\frac{1}{p}%
}\left( \frac{\max \left\{ \left\vert f^{\prime }(a)\right\vert
^{q},\left\vert f^{\prime }(b)\right\vert ^{q}\right\} }{2}\right) ^{\frac{1%
}{q}}
\end{eqnarray*}%
where $p=\frac{q}{q-1}.$
\end{theorem}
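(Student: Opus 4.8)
The plan is to apply Lemma~\ref{lem 3.1} to pass to the integral $\eta(b,a)\int_0^1|m(t)||f'(a+t\eta(b,a))|\,dt$, split it at $t=\tfrac12$ according to the definition of $m(t)$, and then estimate each of the two pieces with H\"older's inequality with exponents $p$ and $q$. On $[0,\tfrac12]$ we write
$\int_0^{1/2}|t-\tfrac16||f'(a+t\eta(b,a))|\,dt\le\bigl(\int_0^{1/2}|t-\tfrac16|^p\,dt\bigr)^{1/p}\bigl(\int_0^{1/2}|f'(a+t\eta(b,a))|^q\,dt\bigr)^{1/q}$, and similarly on $[\tfrac12,1]$ with $|t-\tfrac56|$. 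This is exactly the same opening move as in Theorem~\ref{teo 3.2}; the only change is in how the second factor is bounded.

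Next I would handle the $p$-power integrals. The key computation is the identity already recorded in the proof of Theorem~\ref{teo 3.3}, namely
$\int_0^{1/2}|t-\tfrac16|^p\,dt=\int_{1/2}^{1}|t-\tfrac56|^p\,dt=\dfrac{1+2^{p+1}}{6^{p+1}(p+1)}$,
obtained by splitting each integral at the point where the integrand changes sign ($t=\tfrac16$ and $t=\tfrac56$ respectively) and integrating the resulting monomials. So both $(\,\cdot\,)^{1/p}$ factors equal $\bigl(\tfrac{1+2^{p+1}}{6^{p+1}(p+1)}\bigr)^{1/p}$, which is where the common constant in the claimed bound comes from.

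For the $q$-power factors I would invoke the prequasiinvexity of $|f'|$ (hence of $|f'|^q$): for $t\in[0,1]$, $|f'(a+t\eta(b,a))|^q\le\max\{|f'(a)|^q,|f'(b)|^q\}$, a constant in $t$. Therefore $\int_0^{1/2}|f'(a+t\eta(b,a))|^q\,dt\le\tfrac12\max\{|f'(a)|^q,|f'(b)|^q\}$, and the same bound holds on $[\tfrac12,1]$, so each $(\,\cdot\,)^{1/q}$ factor is at most $\bigl(\tfrac{\max\{|f'(a)|^q,|f'(b)|^q\}}{2}\bigr)^{1/q}$. Combining: the whole expression is bounded by $\eta(b,a)\cdot 2\cdot\bigl(\tfrac{1+2^{p+1}}{6^{p+1}(p+1)}\bigr)^{1/p}\bigl(\tfrac{\max\{|f'(a)|^q,|f'(b)|^q\}}{2}\bigr)^{1/q}$, which is precisely the asserted inequality (the factor $2$ comes from adding the two identical pieces).

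There is no real obstacle here; the argument is a routine combination of Lemma~\ref{lem 3.1}, H\"older, and the defining inequality of prequasiinvexity. The only point demanding minor care is the elementary evaluation of $\int_0^{1/2}|t-\tfrac16|^p\,dt$ — one must split at $t=\tfrac16$ and add $\int_0^{1/6}(\tfrac16-t)^p\,dt+\int_{1/6}^{1/2}(t-\tfrac16)^p\,dt=\tfrac{(1/6)^{p+1}}{p+1}+\tfrac{(1/3)^{p+1}}{p+1}=\tfrac{1+2^{p+1}}{6^{p+1}(p+1)}$ — but this is already done in the proof of Theorem~\ref{teo 3.3} and may simply be cited. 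Unlike the preinvex case, no separation of the $|f'(a)|$ and $|f'(b)|$ contributions is needed, since the prequasiinvex bound is already symmetric and independent of $t$, which is why the final bound is cleaner than those in Section~2.
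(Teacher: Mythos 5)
Your proof is correct and follows essentially the same route as the paper's: apply Lemma~\ref{lem 3.1}, split at $t=\tfrac12$, use H\"older with exponents $p,q$ on each piece, bound $|f'(a+t\eta(b,a))|^q$ by $\max\{|f'(a)|^q,|f'(b)|^q\}$ via prequasiinvexity, and evaluate $\int_0^{1/2}|t-\tfrac16|^p\,dt=\int_{1/2}^{1}|t-\tfrac56|^p\,dt=\tfrac{1+2^{p+1}}{6^{p+1}(p+1)}$ by splitting at the sign-change points. The only (welcome) refinement over the paper is your explicit remark that prequasiinvexity of $|f'|$ passes to $|f'|^q$ by monotonicity of $x\mapsto x^q$.
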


\begin{proof}
From Lemma \ref{lem 3.1} and using the H\"{o}lder inequality, we have 
\begin{eqnarray*}
&&\left\vert \frac{1}{6}\left[ f(a)+4f\left( \frac{2a+\eta (b,a)}{2}\right)
+f(a+\eta (b,a))\right] -\frac{1}{\eta (b,a)}\int_{a}^{a+\eta
(b,a)}f(x)dx\right\vert \\
&\leq &\eta (b,a)\left\{ \left( \int_{0}^{\frac{1}{2}}\left\vert t-\frac{1}{6%
}\right\vert ^{p}dt\right) ^{\frac{1}{p}}\left( \int_{0}^{\frac{1}{2}%
}\left\vert f^{\prime }(a+t\eta (b,a))\right\vert ^{q}dt\right) ^{\frac{1}{q}%
}\right. \\
&&+\left. \left( \int_{\frac{1}{2}}^{1}\left\vert t-\frac{5}{6}\right\vert
^{p}dt\right) ^{\frac{1}{p}}\left( \int_{\frac{1}{2}}^{1}\left\vert
f^{\prime }(a+t\eta (b,a))\right\vert ^{q}dt\right) ^{\frac{1}{q}}\right\} .
\end{eqnarray*}%
Since $\left\vert f^{\prime }\right\vert ^{q}$ is prequasiinvex, we obtain 
\begin{eqnarray*}
&&\left\vert \frac{1}{6}\left[ f(a)+4f\left( \frac{2a+\eta (b,a)}{2}\right)
+f(a+\eta (b,a))\right] -\frac{1}{\eta (b,a)}\int_{a}^{a+\eta
(b,a)}f(x)dx\right\vert \\
&\leq &\eta (b,a)\left\{ \left( \int_{0}^{\frac{1}{6}}\left( \frac{1}{6}%
-t\right) ^{p}dt+\int_{\frac{1}{6}}^{\frac{1}{2}}\left( t-\frac{1}{6}\right)
^{p}dt\right) ^{\frac{1}{p}}\right. \\
&&\times \left. \left( \int_{0}^{\frac{1}{2}}\max \left\{ \left\vert
f^{\prime }(a)\right\vert ^{q},\left\vert f^{\prime }(b)\right\vert
^{q}\right\} dt\right) ^{\frac{1}{q}}\right. \\
&&+\left. \left( \int_{\frac{1}{2}}^{\frac{5}{6}}\left( \frac{5}{6}-t\right)
^{p}dt+\int_{\frac{5}{6}}^{1}\left( t-\frac{5}{6}\right) ^{p}dt\right) ^{%
\frac{1}{p}}\right. \\
&&\times \left. \left( \int_{\frac{1}{2}}^{1}\max \left\{ \left\vert
f^{\prime }(a)\right\vert ^{q},\left\vert f^{\prime }(b)\right\vert
^{q}\right\} dt\right) ^{\frac{1}{q}}\right\} \\
&=&2\eta (b,a)\left( \frac{1+2^{p+1}}{6^{p+1}(p+1)}\right) ^{\frac{1}{p}%
}\left( \frac{\max \left\{ \left\vert f^{\prime }(a)\right\vert
^{q},\left\vert f^{\prime }(b)\right\vert ^{q}\right\} }{2}\right) ^{\frac{1%
}{q}}.
\end{eqnarray*}%
We used%
\begin{eqnarray*}
\int_{0}^{\frac{1}{6}}\left( \frac{1}{6}-t\right) ^{p}dt+\int_{\frac{1}{6}}^{%
\frac{1}{2}}\left( t-\frac{1}{6}\right) ^{p}dt &=&\int_{\frac{1}{2}}^{\frac{5%
}{6}}\left( \frac{5}{6}-t\right) ^{p}dt+\int_{\frac{5}{6}}^{1}\left( t-\frac{%
5}{6}\right) ^{p}dt \\
&=&\frac{1+2^{p+1}}{6^{p+1}(p+1)}
\end{eqnarray*}%
in the above inequality to complete the proof.
\end{proof}

\begin{theorem}
\label{teo 4.3} Under the assumptions of Theorem \ref{teo 4.2}, we have the
following inequality%
\begin{eqnarray*}
&&\left\vert \frac{1}{6}\left[ f(a)+4f\left( \frac{2a+\eta (b,a)}{2}\right)
+f(a+\eta (b,a))\right] -\frac{1}{\eta (b,a)}\int_{a}^{a+\eta
(b,a)}f(x)dx\right\vert \\
&\leq &\eta \left( b,a\right) \left( \frac{2(1+2^{p+1})}{6^{p+1}(p+1)}%
\right) ^{\frac{1}{p}}\left[ \frac{\max \left\{ \left\vert f^{\prime
}(a)\right\vert ^{q},\left\vert f^{\prime }(b)\right\vert ^{q}\right\} }{2}%
\right] ^{\frac{1}{q}}.
\end{eqnarray*}
\end{theorem}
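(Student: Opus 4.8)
The plan is to run the proof of Theorem~\ref{teo 3.3} essentially word for word, replacing the preinvexity bound on $\left\vert f^{\prime }\right\vert ^{q}$ by the prequasiinvexity bound. First I would invoke Lemma~\ref{lem 3.1}: it shows that the left-hand side of the claimed inequality does not exceed $\eta (b,a)\int_{0}^{1}\left\vert m(t)\right\vert \left\vert f^{\prime }(a+t\eta (b,a))\right\vert dt$ (take absolute values inside the integral and use $\eta (b,a)>0$). Since $q>1$, the conjugate exponent $p=q/(q-1)$ is finite, so Hölder's inequality on $[0,1]$ gives the further bound $\eta (b,a)\left( \int_{0}^{1}\left\vert m(t)\right\vert ^{p}dt\right) ^{1/p}\left( \int_{0}^{1}\left\vert f^{\prime }(a+t\eta (b,a))\right\vert ^{q}dt\right) ^{1/q}$.

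It then remains to estimate the two factors. For the first I would reuse the computation already recorded inside the proof of Theorem~\ref{teo 3.3}, namely
\begin{equation*}
\int_{0}^{1}\left\vert m(t)\right\vert ^{p}dt=\int_{0}^{1/2}\left\vert t-\tfrac{1}{6}\right\vert ^{p}dt+\int_{1/2}^{1}\left\vert t-\tfrac{5}{6}\right\vert ^{p}dt=\frac{2(1+2^{p+1})}{6^{p+1}(p+1)},
\end{equation*}
where each symmetric piece equals $\dfrac{1+2^{p+1}}{6^{p+1}(p+1)}$ after splitting at $1/6$ (respectively $5/6$) and integrating the two resulting monomials. For the second factor I would use that $\left\vert f^{\prime }\right\vert ^{q}$ is prequasiinvex with respect to $\eta $: as $t$ ranges over $[0,1]$ the point $a+t\eta (b,a)$ traverses the $\eta $-path $P_{ac}$, so $\left\vert f^{\prime }(a+t\eta (b,a))\right\vert ^{q}\le \max \{\left\vert f^{\prime }(a)\right\vert ^{q},\left\vert f^{\prime }(b)\right\vert ^{q}\}$ for every such $t$, and integrating over $[0,1]$ gives $\int_{0}^{1}\left\vert f^{\prime }(a+t\eta (b,a))\right\vert ^{q}dt\le \max \{\left\vert f^{\prime }(a)\right\vert ^{q},\left\vert f^{\prime }(b)\right\vert ^{q}\}$. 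Substituting both estimates into the Hölder bound yields the asserted inequality.

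I do not expect a genuine obstacle here: both ingredients are already present in the paper (the constant $\int_{0}^{1}\left\vert m(t)\right\vert ^{p}dt$ is literally the one appearing in Theorem~\ref{teo 3.3}, and passing from the integral of $\left\vert f^{\prime }\right\vert ^{q}$ to a ``$\max$'' is exactly the device of Theorems~\ref{teo 4.1} and~\ref{teo 4.2}). The one step I would double-check is the bookkeeping of the final factor: the substitution above produces $\left[ \max \{\left\vert f^{\prime }(a)\right\vert ^{q},\left\vert f^{\prime }(b)\right\vert ^{q}\}\right] ^{1/q}$, whereas the statement displays the averaged form $\left[ \tfrac{1}{2}\max \{\left\vert f^{\prime }(a)\right\vert ^{q},\left\vert f^{\prime }(b)\right\vert ^{q}\}\right] ^{1/q}$. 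One should make sure no spurious factor of $2$ has been carried over from the preinvex analogue, where the genuine identity $\int_{0}^{1}[(1-t)\left\vert f^{\prime }(a)\right\vert ^{q}+t\left\vert f^{\prime }(b)\right\vert ^{q}]\,dt=\tfrac{1}{2}(\left\vert f^{\prime }(a)\right\vert ^{q}+\left\vert f^{\prime }(b)\right\vert ^{q})$ does contain the $\tfrac12$, but here $\int_{0}^{1}\max \,dt=\max $. Once that normalization is pinned down, the argument is complete.
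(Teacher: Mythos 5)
Your route is exactly the paper's own: Lemma \ref{lem 3.1}, H\"{o}lder's inequality on all of $[0,1]$ with the kernel $m$, the evaluation $\int_{0}^{1}\left\vert m(t)\right\vert ^{p}dt=\frac{2(1+2^{p+1})}{6^{p+1}(p+1)}$, and the pointwise bound $\left\vert f^{\prime }(a+t\eta (b,a))\right\vert ^{q}\leq \max \left\{ \left\vert f^{\prime }(a)\right\vert ^{q},\left\vert f^{\prime }(b)\right\vert ^{q}\right\} $ from prequasiinvexity. But the ``bookkeeping'' point you defer at the end is not a normalization that can be settled in favour of the printed statement: since $\int_{0}^{1}\max \left\{ \cdot \right\} dt=\max \left\{ \cdot \right\} $, your computation (and the paper's, which performs the identical steps) yields the factor $\left[ \max \left\{ \left\vert f^{\prime }(a)\right\vert ^{q},\left\vert f^{\prime }(b)\right\vert ^{q}\right\} \right] ^{1/q}$, not $\left[ \tfrac{1}{2}\max \left\{ \cdot \right\} \right] ^{1/q}$. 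The extra $\tfrac{1}{2}$ in the paper's final equality is imported from the preinvex average $\int_{0}^{1}\left[ (1-t)\left\vert f^{\prime }(a)\right\vert ^{q}+t\left\vert f^{\prime }(b)\right\vert ^{q}\right] dt=\tfrac{1}{2}\left( \left\vert f^{\prime }(a)\right\vert ^{q}+\left\vert f^{\prime }(b)\right\vert ^{q}\right) $ of Theorem \ref{teo 3.3} and is unjustified here.

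Moreover, the statement with the $\tfrac{1}{2}$ cannot be repaired by another argument, because it is false in general. Take $\eta (b,a)=b-a$ and the absolutely continuous (piecewise linear) $f$ with $f^{\prime }(a+t\eta (b,a))=M\,\mathrm{sign}\,(m(t))$ a.e., where $M=\max \left\{ \left\vert f^{\prime }(a)\right\vert ,\left\vert f^{\prime }(b)\right\vert \right\} $; then $\left\vert f^{\prime }\right\vert \equiv M$ is prequasiinvex and Lemma \ref{lem 3.1} gives a left-hand side equal to $\eta (b,a)\,M\int_{0}^{1}\left\vert m(t)\right\vert dt=\frac{5}{36}\eta (b,a)M\approx 0.139\,\eta (b,a)M$, while for $p=q=2$ the asserted right-hand side is $\eta (b,a)\left( \frac{1}{36}\right) ^{1/2}\left( \frac{M^{2}}{2}\right) ^{1/2}=\frac{\eta (b,a)M}{6\sqrt{2}}\approx 0.118\,\eta (b,a)M$. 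So what your argument actually proves is the corrected inequality with $\left[ \max \left\{ \left\vert f^{\prime }(a)\right\vert ^{q},\left\vert f^{\prime }(b)\right\vert ^{q}\right\} \right] ^{1/q}$ in place of $\left[ \max \left\{ \cdot \right\} /2\right] ^{1/q}$, and that bound coincides exactly with Theorem \ref{teo 4.2}, since $\left( \frac{2(1+2^{p+1})}{6^{p+1}(p+1)}\right) ^{1/p}M=2\left( \frac{1+2^{p+1}}{6^{p+1}(p+1)}\right) ^{1/p}\left( \frac{M^{q}}{2}\right) ^{1/q}$. Your proof is a sound proof of that weaker, correct statement; the discrepancy you flagged is a genuine error in the theorem as printed and in the paper's own proof, not a loose end in yours.
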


\begin{proof}
From Lemma \ref{lem 3.1}, prequasiinvexity of $\left\vert f^{\prime
}\right\vert ^{q}$ and using the H\"{o}lder inequality, we have 
\begin{eqnarray*}
&&\left\vert \frac{1}{6}\left[ f(a)+4f\left( \frac{2a+\eta (b,a)}{2}\right)
+f(a+\eta (b,a))\right] -\frac{1}{\eta (b,a)}\int_{a}^{a+\eta
(b,a)}f(x)dx\right\vert \\
&\leq &\eta (b,a)\left[ \int_{0}^{1}\left\vert m(t)\right\vert \left\vert
f^{\prime }(a+t\eta (b,a))\right\vert dt\right] \\
&\leq &\eta (b,a)\left( \int_{0}^{1}\left\vert m(t)\right\vert ^{p}dt\right)
^{\frac{1}{p}}\left( \int_{0}^{1}\max \left\{ \left\vert f^{\prime
}(a)\right\vert ^{q},\left\vert f^{\prime }(b)\right\vert ^{q}\right\}
dt\right) ^{\frac{1}{q}} \\
&\leq &\eta (b,a)\left( \int_{0}^{\frac{1}{2}}\left\vert t-\frac{1}{6}%
\right\vert ^{p}dt+\int_{\frac{1}{2}}^{1}\left\vert t-\frac{5}{6}\right\vert
^{p}dt\right) ^{\frac{1}{p}}\left( \int_{0}^{1}\max \left\{ \left\vert
f^{\prime }(a)\right\vert ^{q},\left\vert f^{\prime }(b)\right\vert
^{q}\right\} dt\right) ^{\frac{1}{q}} \\
&=&\eta (b,a)\left( \frac{2(1+2^{p+1})}{6^{p+1}(p+1)}\right) ^{\frac{1}{p}}%
\left[ \frac{\max \left\{ \left\vert f^{\prime }(a)\right\vert
^{q},\left\vert f^{\prime }(b)\right\vert ^{q}\right\} }{2}\right] ^{\frac{1%
}{q}}
\end{eqnarray*}%
where we used the fact that%
\begin{equation*}
\int_{0}^{\frac{1}{2}}\left\vert t-\frac{1}{6}\right\vert ^{p}dt=\int_{\frac{%
1}{2}}^{1}\left\vert t-\frac{5}{6}\right\vert ^{p}dt=\frac{(1+2^{p+1})}{%
6^{p+1}(p+1)}.
\end{equation*}%
The proof is completed.
\end{proof}

\end{document}